\documentclass[12pt]{article}

\usepackage{amsthm}
\usepackage{amssymb}
\usepackage[pdftex]{graphicx}
\usepackage{epstopdf}

\newtheorem{theorem}{\large \it Theorem}
\newtheorem{lemma}[theorem]{\it Lemma}
\newtheorem{definition}{Definition}

\usepackage{geometry}
\geometry{a4paper}

\begin{document}

\title{Characteristics of shape and knotting in ideal rings. }

\author{Laura Zirbel\\
  Department of Mathematics,\\
  University of California, Santa Barbara,\\
  Santa Barbara, CA 93016, USA\\
  Email: \texttt{lzirbel@math.ucsb.edu}\\
  Homepage: \texttt{http://www.math.ucsb.edu/~lzirbel}
\and Kenneth C. Millett\\
  Department of Mathematics,\\
  University of California, Santa Barbara,\\
  Santa Barbara, CA 93016, USA\\
  Email: \texttt{millett@math.ucsb.edu}\\
  Homepage: \texttt{http://www.math.ucsb.edu/~millett}}

\maketitle

\begin{abstract}
We present two descriptions of the the local scaling and shape of ideal rings, primarily featuring subsegments. Our focus will be the squared radius of gyration of subsegments and the squared internal end to end distance, defined to be the average squared distance between vertices $k$ edges apart. We calculate the exact averages of these values over the space of all such ideal rings, not just a calculation of the order of these averages, and compare these to the equivalent values in open chains. This comparison will show that the structure of ideal rings is similar to that of ideal chains for only exceedingly short lengths.

These results will be corroborated by numerical experiments. They will be used to analyze the convergence of our generation method and the effect of knotting on these characteristics of shape.     
\end{abstract}

\section{Introduction}

Long strings of connected molecules, called polymers, are central structures in life and physical sciences, as well as engineering. Prominent examples are DNA, proteins, polystyrene, and silicone. With regard to DNA, Fiers and Sinsheimer first showed that the DNA of a specific virus is a single-stranded ring \cite{Fiers62}. Because of this closure condition, DNA, like other closed polymers, can be knotted. In 1976, Liu et al. discovered examples of knotted DNA, which was followed by the discovery of topoisomerases, enzymes which knot and unknot DNA \cite{Cham01, Liu76}. These discoveries suggest that knotting plays an important role in the behavior and shape of DNA and other polymers.

We will examine ideal rings, embedded equilateral polygons in $\mathbb{R}^3$, which provide a model for polymers under the $\theta$-condition, where excluded volume can be ignored \cite{degennes, flory, suzuki2009}. We will compare ideal rings with ideal chains,  random walks in $\mathbb{R}^3$, to determine the effect of the closure constraint as well as the effect of knotting. 

These models will be used to analyze the relationship between shape, scale, knotting and two physical characteristics specific to  their local structure. We define the average squared radius of gyration of subsegments, calculated by averaging the standard squared radius of gyration of each subsegment of length $k$, and the squared internal end to end distance, the average distance between vertices $k$ edges apart. Rather than finding approximations of these averages, we will determine the exact theoretical averages for these descriptions of shape, as well as discuss numerical simulations and show how these characteristics are affected by knotting.

In numerical studies examining squared end to end distance, squared end to end distance is refered to internal end to end distance or two point correlation \cite{shimamura2005, valle2005, yao2004}. These characteristics are simplified in open chains because subsegments of length $k$ in chains of length $100$ are identical to subsegments of length $k$ in chains of length $100,000$: the ambient length has no effect on the behavior of subsegments. However, in the case of ideal rings, subsegments of length $k$ in a ring of length $n$ do not behave the same as a subsegment of length $k$ in a ring of length $m \neq n$. Thus, subsegment behavior relies on both $k$ and $n$. It will be shown that the structure of ideal rings is similar to that of ideal chains for only exceedingly short lengths, which may correspond with the difference in $\theta$-temperature between open chain polymers and ring polymers \cite{suzuki2009}. Understanding in the case of closed chains may also allow us to use these charactereistics to identify and describe the knotted portions of open chains \cite{kanaeda2008, shimamura2005}.

\section{An Introduction to Ideal Rings and Their Notation}
 
 \begin{definition} An {\em ideal ring}, $P$, is an $n$-edged equilateral polygon embedded in $\mathbb{R}^3$, with one vertex at the origin. Let $e_1,...,e_n$ be unit vectors such that the $k^{\textrm{th}}$ vertex of $P$ is $v_k = \displaystyle \sum_{i=1}^k e_i$. Let each $e_i$ be called an {\em edge vector} of $P$. The fact that $P$ is a polygon is equivalent to requiring closure, that is, that $v_n = \displaystyle \sum_{i=1}^n e_i = 0.$ Let $\mathcal{P}_n$ denote the space of all such polygons.
\end{definition}

The careful reader may note that these ideal rings are more specific than usual. We require these polygons to be based at the origin, and be oriented. This definition makes notation concise, and does not affect the averages we will calculate, as they are independent of the base point and orientation. 

For each $P \in \mathcal{P}_n$, the edge vectors $e_i$ are all identically distributed. Therefore, when $i \neq j$, $(e_i \cdot e_j)$ is a random variable that does not depend on $i$ and $j$. 

We will compare these with ideal chains, a similar population but without the closure constraint. 

 \begin{definition} An {\em open chain}, $W$, also sometimes called an {\em ideal chain} or {\em ideal open chain}, is an $n$-edged random walk in $\mathbb{R}^3$, where each edge has unit length. Let $e_1,...,e_n$ be unit vectors such that the $k^{\textrm{th}}$ vertex is given by $v_k = \displaystyle \sum_{i=1}^k e_i$. As with ideal rings, let $e_i$ be called an edge vector. Let $\mathcal{W}_n$ denote the space of all such open chains.
\end{definition}

Again, the careful reader will see that these are also based at the origin for ease of notation.

\section{Rigorous Calculation of Theoretical Averages for Ideal Rings and Ideal Chains}

\subsection{Squared End to End Distance in Ideal Chains}

For comparison, we will first calculate the squared end to end distance in ideal chains, as in \cite{DoiEdwards}. 

\begin{lemma}
\label{Cn}
In $\mathcal{W}_n$, the average squared end to end distance is $n$. 
\end{lemma}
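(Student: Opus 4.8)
The plan is to compute the squared end-to-end distance directly from the edge-vector representation. The endpoint of an open chain is $v_n = \sum_{i=1}^n e_i$, so the squared end-to-end distance is $\|v_n\|^2 = \left(\sum_{i=1}^n e_i\right)\cdot\left(\sum_{j=1}^n e_j\right)$. Expanding this dot product gives $\sum_{i=1}^n\sum_{j=1}^n (e_i\cdot e_j)$. First I would split this double sum into the diagonal terms, where $i=j$, and the off-diagonal terms, where $i\neq j$, and then take the average over $\mathcal{W}_n$ by linearity of expectation.

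For the diagonal terms, each $e_i$ is a unit vector, so $e_i\cdot e_i = \|e_i\|^2 = 1$, contributing exactly $n$ to the sum. The crux is the off-diagonal terms: I would argue that for $i\neq j$ the edges $e_i$ and $e_j$ are independent and uniformly distributed on the unit sphere (this is precisely the defining feature of an ideal \emph{open} chain, as opposed to a ring, where closure correlates the edges). Hence the average of $e_i\cdot e_j$ factors as $\langle e_i\rangle\cdot\langle e_j\rangle$, and each $\langle e_i\rangle = 0$ by the rotational symmetry of the uniform distribution on the sphere. Therefore every off-diagonal term averages to zero, and the full average reduces to the diagonal contribution of $n$.

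The main obstacle, such as it is, is simply justifying cleanly that the off-diagonal averages vanish. The paper already observes that the $e_i$ are identically distributed and that $(e_i\cdot e_j)$ for $i\neq j$ is a random variable independent of the particular indices, so I would lean on that and on the independence of distinct edges in the unconstrained walk. One subtlety worth flagging is that this argument relies essentially on the \emph{absence} of the closure constraint: in a ring the edges are not independent, so the same off-diagonal simplification fails, which is exactly why rings require the separate, more delicate treatment developed later. For the open chain, though, independence makes the computation immediate, and the result $\langle\|v_n\|^2\rangle = n$ follows at once.
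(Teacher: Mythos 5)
Your proposal is correct and follows essentially the same route as the paper: expand $\|\sum_i e_i\|^2$ into diagonal and off-diagonal terms, get $n$ from the unit-length diagonal terms, and kill the off-diagonal terms because $\langle e_i\cdot e_j\rangle = 0$ for $i\neq j$ in the unconstrained walk. Your extra justification via independence and the rotational symmetry of the uniform distribution on the sphere is a slightly more explicit version of the paper's ``completely uncorrelated'' remark, but the argument is the same.
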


\begin{proof}
Let $W \in \mathcal{W}_n$. Calculating the end to end distance of $W$: 

\begin{eqnarray*}
\left|\left| \displaystyle \sum_{i=1}^n e_i \right|\right|^2 & = & (\sum_{i=1}^n e_i \cdot \sum_{i=1}^n e_i) \\
& = & \sum_{i = 1}^n (e_i \cdot e_i) + 2 \left( \sum_{i = 1}^{n-1} \sum_{j = 1}^n (e_i \cdot e_j) \right) \\
\end{eqnarray*}

For random walks $W \in \mathcal{W}_n$, the direction of each edge vector is completely uncorrelated with the direction of the previous edge vector, and each edge vector $e_i$ is a uniformly distributed, unit length, random vector. Thus, for $i = j$, $(e_i \cdot e_j) = 1$, and for $i \neq j$ the average value of $(e_i \cdot e_j)$,  $\left< e_i \cdot e_j \right>$, is $0$. Thus we have that the average end to end distance, taken over all $W \in \mathcal{W}_n$, is $$\left<\left|\left| \displaystyle \sum_{i=1}^n e_i \right|\right|^2\right> = n.$$ \\

\end{proof}

\subsection{Average Edge Product in Rings}

Now we will consider ideal rings. As noted before, $(e_i \cdot e_j)$ is independent of $i$ and $j$. Here, we find an average value for $(e_i \cdot e_j)$, taken over all $i$, $j$ for all $P \in \mathcal{P}_n$. 

\begin{definition}
Consider the space of ideal rings, $\mathcal{P}_n$, for some $n$. Let $r_n$ denote the average of the set $\mathcal{R}_n$, $$\displaystyle \mathcal{R}_n = \{(e_i\cdot e_j) : e_i,e_j \textrm{ are edge vectors of some } P \in \mathcal{P}_n, i \neq j\}.$$ We will call {\em $r_n$ the average edge product.} 
\end{definition}

The following Lemma is generally known, and is foundational to the following proofs \cite{Grosberg08}. 

\begin{lemma}
For all $n\in \mathbb{N}$, the average edge product, over all ideal rings of length $n$, is given by $\displaystyle r_n = \frac{-1}{n-1}$. 
\end{lemma}

\begin{proof}
For all $P \in \mathcal{P}_n$, $\displaystyle \sum_{i=1}^n e_i = 0$. Then squaring both sides we have 
\begin{eqnarray*}
(0 \cdot 0) = 0& = & (\displaystyle \sum_{i=1}^n e_i) \cdot (\displaystyle \sum_{i=1}^n e_i ) \\
& = & \sum_{i=1}^n(e_i\cdot e_i)+2\sum_{1\leq i < j \leq n} (e_i \cdot e_j). 
\end{eqnarray*}

Taking the average over all $P \in \mathcal{P}$, we replace $(e_i \cdot e_j)$ with $\left<e_i \cdot e_j\right> = r_n$:

$$0 = n+ 2 \frac{n(n-1)}{2} r_n.$$ 

Solving for $r_n$ we have $$r_n = \frac{-n}{n(n-1)} = \frac{-1}{n-1}.$$
\end{proof}

\subsection{Squared End to End Distance}

In open chains, the average squared end to end distance of a subsegments of length $k$ in a chain of length $n$ is identical to the average squared end to end distance of chains of length $k$. That is, the ambient chain has no effect on the shape of the subsegment. The same is not true in ideal rings, where the closure constraint plays a pivotal role in local scale, as we will see.  

\begin{definition} For any $P \in \mathcal{P}_n$, define $$d_{P}(k,j) = \displaystyle \left|\left|\sum_{i=j}^{j+k} e_i \right|\right|^2 = ( \sum_{i=j}^{j+k} e_i)\cdot( \sum_{i=j}^{j+k} e_i),$$ the squared distance between the $j^{\textrm{th}}$ and $j+k^{\textrm{th}}$ vertex. We call $d_{P}(k,j)$ the {\em squared end-to-end distance of length $k$ at $j$} of $P$. 
Define $A(k,n)$ to be the average value of $$\{d_{P}(k,j): 1\leq j \leq n, P \in \mathcal{P}_n\},$$ that is, let $A(k,n)$ denote the average squared end to end distance of a subsegment of length $k$ in an ideal ring of length $n$. We will call $A(k,n)$ the {\em average end to end distance of length $k$}. 
\end{definition}

\begin{figure}[!htp]
  \begin{center}
    \includegraphics[scale=.36]{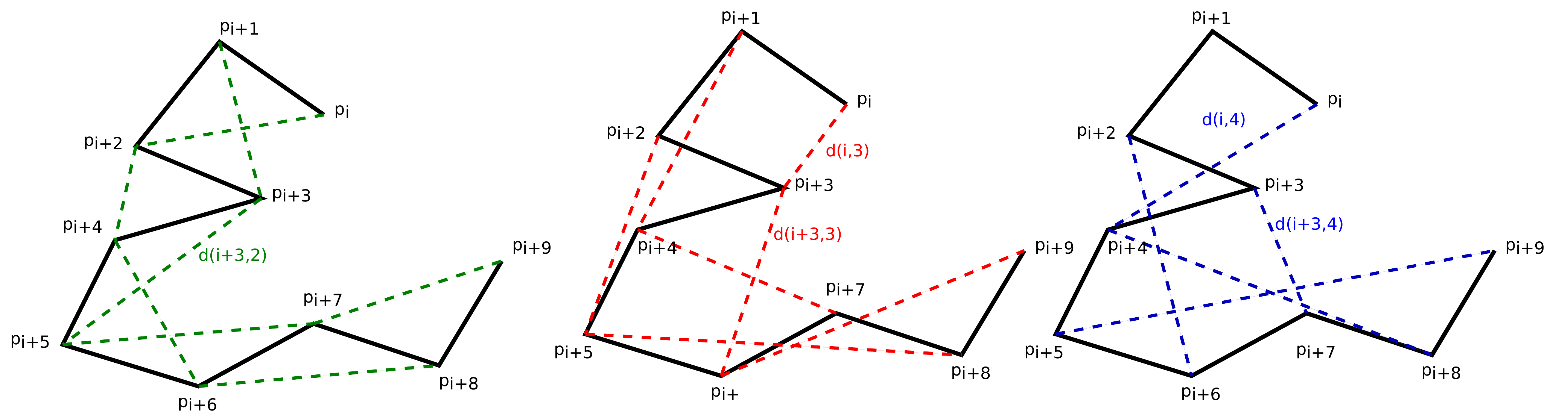}
  \end{center}
 \caption{In the above image, we have end to end distances marked for $k=2$, $k = 3$ and $k = 4$. We would take the average of their squares to find the average squared end to end distance for $k=2,3$ and $4$. }
\end{figure}

 \begin{theorem}
 \label{EtE}
 $\displaystyle A(k,n)=\frac{k(n-k)}{n-1}$
 \end{theorem}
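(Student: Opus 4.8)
The plan is to reduce the entire computation to the single quantity $r_n = -1/(n-1)$ from the previous Lemma, together with the fact already noted that $\langle e_i \cdot e_j\rangle = r_n$ for every pair $i \neq j$ independently of the indices. First I would fix $j$ and expand the squared end-to-end distance as an inner product,
\[
d_P(k,j) = \sum_i (e_i \cdot e_i) + 2\sum_{i < i'} (e_i \cdot e_{i'}),
\]
where both sums range over the $k$ edge vectors comprising the length-$k$ subsegment. This splits the expression into $k$ diagonal terms and $k(k-1)/2$ off-diagonal pairs, exactly as in the expansions used in Lemma~\ref{Cn} and in the edge-product Lemma.

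Next I would average over all $P \in \mathcal{P}_n$. Since each $e_i$ is a unit vector, every diagonal term contributes $\langle e_i \cdot e_i\rangle = 1$, for a total of $k$; and since every off-diagonal average equals $r_n$ no matter which two edges are involved, the cross terms contribute $2 \cdot \frac{k(k-1)}{2}\, r_n = k(k-1)\, r_n$. Substituting $r_n = -1/(n-1)$ and simplifying then yields
\[
\langle d_P(k,j)\rangle = k + k(k-1)\frac{-1}{n-1} = \frac{k(n-1) - k(k-1)}{n-1} = \frac{k(n-k)}{n-1}.
\]

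The key structural observation — and the reason the result is independent of $j$, so that the outer average over $1 \le j \le n$ changes nothing — is that the edge vectors of a ring are identically distributed and every pairwise product shares the same expectation $r_n$; this is what lets the indices run cyclically modulo $n$ without altering the tally. The main point to be careful about is precisely this independence of the pair average from the indices, including across the wrap-around, since that is exactly where the closure constraint enters the argument. Unlike in Lemma~\ref{Cn}, where distinct edges are uncorrelated and contribute nothing, here each pair contributes $r_n = -1/(n-1)$, and it is this $n$-dependence that produces the factor $(n-k)/(n-1)$ distinguishing a subsegment of a ring from a genuinely free chain of length $k$.
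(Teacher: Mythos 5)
Your proposal is correct and follows essentially the same route as the paper: expand the squared norm into $k$ diagonal terms and $k(k-1)/2$ cross terms, replace each cross term by its average $r_n = -1/(n-1)$, and simplify to $k(n-k)/(n-1)$. The justification you give for index-independence of the pair averages (including across the wrap-around) matches the paper's appeal to the edge-product lemma.
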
 

\begin{proof}
First we have that 

\begin{eqnarray*}
d_{P}(k,j) = \left|\left| \displaystyle \sum_{i=j}^{j+k} e_i \right|\right|^2 &=& (\displaystyle \sum_{i=j}^{j+k} e_i) \cdot (\displaystyle \sum_{i=j}^{j+k} e_i )\\
&=& \sum_{i=j}^{j+k} (e_i\cdot e_i)+2\sum_{j\leq i < m \leq j+k} (e_i \cdot e_m) \\
&=& k+2\sum_{j\leq i < m \leq j+k} (e_i \cdot e_m) 
\end{eqnarray*}  

We have shown that independent of $P$, $k$, $i$ and $j$, the average value of $\displaystyle (e_i \cdot e_m) = \frac{-1}{n-1}$. Therefore the average value of $d_{P}(k,j)$ is, independent of $P$, $i$ and $j$,  $\displaystyle  k+2\sum_{j\leq i < m \leq j+k}r_n$. This average is precisely $A(k,n)$. We conclude that 
\begin{eqnarray*}
A(k,n) & = &  k+2\sum_{j\leq i < m \leq j+k}r_n \\
& = & k + 2 \frac{(k-1)(k)}{2} \frac{-1}{n-1} \\
&=& \frac{k(n - k)}{n-1} \\
\end{eqnarray*} 

Therefore the average value of $\parallel v_k \parallel^2$ is $\displaystyle A(k,n) = \frac{k(n-k)}{n-1}$.\
\end{proof}

We can verify some key features of this identity immediately. At $k = 1$ and $k = n-1$, we have that the average squared end to end distance is $1$. Vertices one edge apart must be distance $1$ apart, as we would expect, so the theoretical average agrees with the physical reality. Also, the function is symmetric about $\frac{n}{2}$, as a segment of length $k$ and its complement, a segment of length $n-k$, should have the same end to end distance. Lastly, the function achieves its maximum at $\frac{n}{2}$, as after that point, on average, the end to end distance must decrease due to the closure condition. Witz showed through a different argument that squared end to end distance scales like  $\frac{k(n-k)}{n}$, which is close to our value \cite{Witz10}. This is an approximation though, as for $k = 1$, we have $\frac{n-1}{n} \neq 1$, though it is close for large values of $k$ and $n$.\\

As in \cite{DoiEdwards} and Lemma \ref{Cn}, for open chains, the average squared end to end distance for ideal chains is $n$ (or $nb^2$ where $b$ is the length of the segments.) The squared end to end distance of subsegments of length $k$ within an open chain of length $n$ will be identical to the squared end to end distance of an open chain of length $k$. Therefore, in the open case, the average squared end to end distance of a subsegment of length $k$ within a chain of length $n$ is $k$. As we can see in Figure \ref{etoecomp}, the average squared end to end distance of subsegments of an open chain and the average squared end to end distance subsegments of an ideal ring look very different for subsegments of very short lengths when $n =50$. We observe that for the average squared end to end distance of segments of length $k$ in open chains to be within $\frac{1}{100}^{\textrm{th}}$ of the average squared end to end distance of segments of length $k$ in ideal rings, we would need $ k  \approx \frac{n}{100}$ or smaller. For length $50$, as in the figure below, $k$ must be length $1$, which is a very strong restraint. 

\begin{figure}[!htp]

  \begin{center}
    \includegraphics[scale=.78]{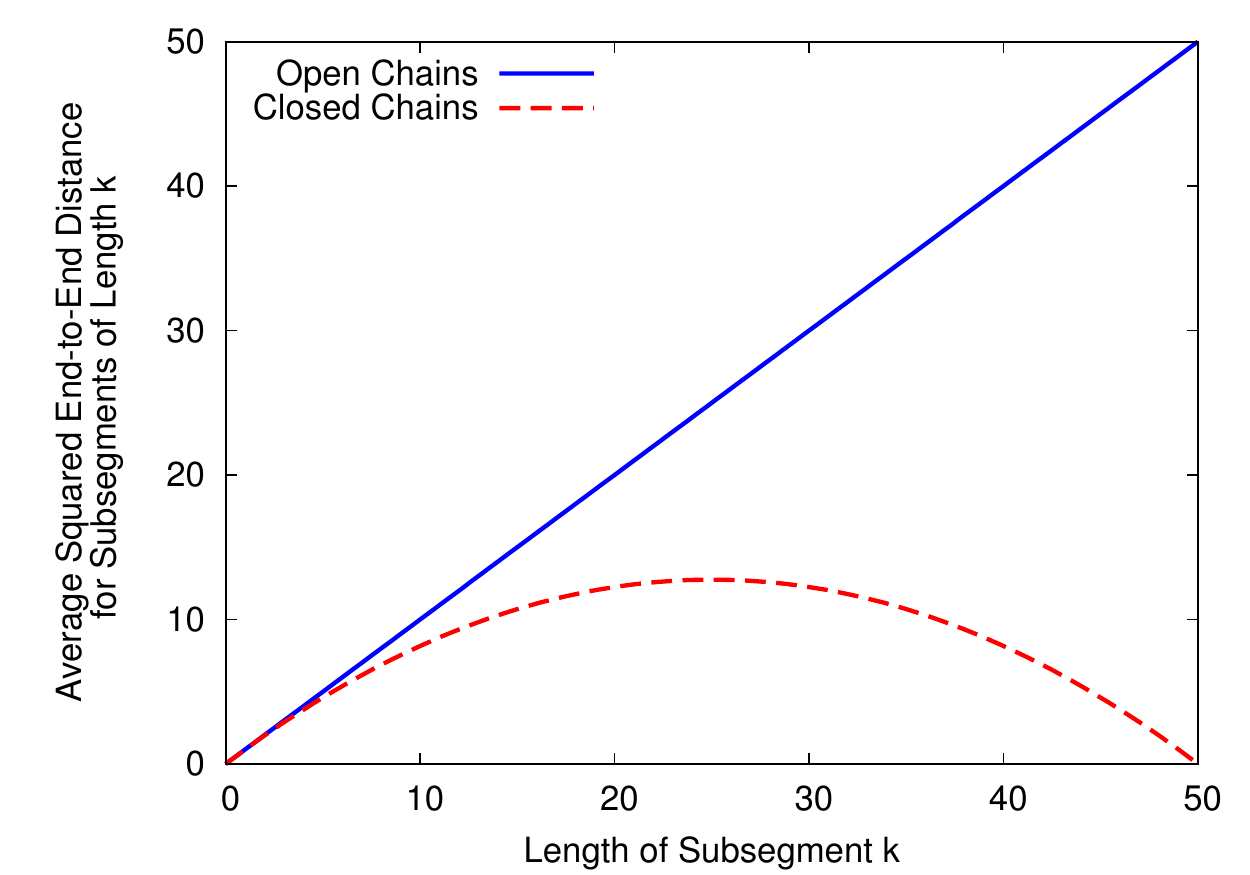}
  \end{center}
 \caption{Here we compare the theoretical average squared end to end distance of subsegments of length $k$ for open chains (blue) and ideal rings (red and dashed), both with total length $n = 50$.}
\label{etoecomp}   
\end{figure}

\subsection{Average Center of Mass} 

For the following definition and Lemma, it is helpful to recall that these ideal rings are based at the origin, $\displaystyle \sum_{i = 1}^n e_i = 0$, as we stipulated originally.\

\begin{definition}
Let $n \in \mathbb{N}$. Let $P\in \mathcal{P}_n$. Define $c_P$, the {\em center of mass of $P$}, to be $\displaystyle c_P = \frac{1}{n} \sum_{k=1}^n v_k $, the average of the vertices.  
\end{definition}

\begin{lemma}
\label{c_P}
For any $P \in \mathcal{P}_n$, $$\displaystyle \parallel c_P \parallel^2 =  \frac{1}{n^2} (\sum_{k=1}^n (n-k+1)^2 + 2 \sum_{1 \leq i < j \leq n} (n-j+1)(n-i+1)(e_i \cdot e_j)). $$
\end{lemma}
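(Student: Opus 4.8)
The plan is to first express the sum of the vertices directly in terms of the edge vectors, and then compute the squared norm by the same diagonal/off-diagonal expansion of a dot product used in Lemma \ref{Cn} and in the average edge product lemma. Crucially, this identity is claimed for each fixed $P$, so no averaging over $\mathcal{P}_n$ is involved; it is a purely algebraic computation.

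First I would rewrite $\sum_{k=1}^n v_k$ in the edge-vector basis. Since $v_k = \sum_{i=1}^k e_i$, a given edge vector $e_i$ contributes to $v_k$ exactly when $k \geq i$, that is, for $k = i, i+1, \dots, n$. Hence $e_i$ occurs with multiplicity $n-i+1$ across the vertices $v_1,\dots,v_n$, and interchanging the order of summation gives $\sum_{k=1}^n v_k = \sum_{i=1}^n (n-i+1)\, e_i$. Dividing by $n$ yields the compact form $c_P = \frac{1}{n}\sum_{i=1}^n (n-i+1)\, e_i$.

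Next I would compute $\|c_P\|^2 = c_P \cdot c_P$ by writing it as the product of two copies of this sum and separating the diagonal terms $i=j$ from the off-diagonal terms. The diagonal contributes $\sum_{i=1}^n (n-i+1)^2 (e_i \cdot e_i)$; since each $e_i$ is a unit vector, $e_i \cdot e_i = 1$, and relabeling the index as $k$ gives $\sum_{k=1}^n (n-k+1)^2$. Collecting the off-diagonal pairs with the usual factor of $2$ gives $2\sum_{1 \leq i < j \leq n} (n-i+1)(n-j+1)(e_i \cdot e_j)$, and adding the two pieces and multiplying by $\frac{1}{n^2}$ produces exactly the stated expression.

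The only real obstacle, and it is a modest one, is the combinatorial book-keeping in the first step: correctly identifying that $e_i$ appears $n-i+1$ times in $\sum_{k=1}^n v_k$. Once that multiplicity is pinned down, the remainder is the standard expansion of $\|\sum w_i e_i\|^2$ into square and cross terms, identical in structure to the computations in the preceding lemmas, so I would expect no subtlety beyond care with the index ranges.
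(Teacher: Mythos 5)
Your proposal is correct and follows essentially the same route as the paper: the paper likewise expands $c_P = \frac{1}{n}\sum_{k=1}^n v_k$ with $v_k = \sum_{i=1}^k e_i$ and collects the resulting double sum into diagonal terms $(n-k+1)^2$ and cross terms $2(n-j+1)(n-i+1)(e_i\cdot e_j)$, which is exactly your multiplicity count of $n-i+1$ occurrences of each $e_i$. Your version merely makes the intermediate identity $\sum_{k=1}^n v_k = \sum_{i=1}^n (n-i+1)e_i$ explicit, which the paper leaves implicit.
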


\begin{proof}
\begin{eqnarray*}
\displaystyle \parallel c_P \parallel^2 &=& (\frac{1}{n} \sum_{k=1}^n v_k \cdot \frac{1}{n} \sum_{k=1}^n v_k) \\
&=&  \frac{1}{n^2} ( \sum_{k=1}^n \sum_{i=1}^k e_i \cdot \sum_{k=1}^n \sum_{j=1}^k e_j)) \\
&=&  \frac{1}{n^2} (\sum_{k=1}^n (n-k+1)^2 + 2 \sum_{1 \leq i < j \leq n} (n-j+1)(n-i+1)(e_i \cdot e_j))
\end{eqnarray*}
\end{proof}

\begin{definition}
As above, let us denote the average of the set $\{ \parallel c_P \parallel^2 : P \in \mathcal{P}_n\}$ as $\parallel c_n \parallel^2$, the {\em average center of mass} of $\mathcal{P}_n$. 
\end{definition}

\begin{lemma}
\label{c_P4n}
$\displaystyle \parallel c_n \parallel^2 = \frac{n+1}{12} .$

\end{lemma}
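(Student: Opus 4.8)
The plan is to start directly from the closed form for $\|c_P\|^2$ established in Lemma \ref{c_P} and average it term by term over $\mathcal{P}_n$. The first sum, $\sum_{k=1}^n (n-k+1)^2$, is deterministic and survives the averaging unchanged. In the second sum, every factor $(n-j+1)(n-i+1)$ is a fixed scalar, so the only random quantity is $(e_i\cdot e_j)$; by linearity of expectation I would replace each $(e_i\cdot e_j)$ with its average $\langle e_i\cdot e_j\rangle = r_n = \frac{-1}{n-1}$, which is legitimate since that average is independent of $i$ and $j$. This gives
\begin{eqnarray*}
\|c_n\|^2 = \frac{1}{n^2}\left( \sum_{k=1}^n (n-k+1)^2 + 2\,r_n \sum_{1\le i<j\le n}(n-j+1)(n-i+1) \right).
\end{eqnarray*}

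The core of the argument is then evaluating the two sums. For the first, the reindexing $m = n-k+1$ turns it into $\sum_{m=1}^n m^2 = \frac{n(n+1)(2n+1)}{6}$. For the double sum, I would set $a_i = n-i+1$ and use the standard identity $\sum_{1\le i<j\le n} a_i a_j = \tfrac{1}{2}\big[(\sum_i a_i)^2 - \sum_i a_i^2\big]$; since $\{a_i\}$ is just $\{1,\dots,n\}$ reindexed, both pieces are again known power sums, namely $\frac{n(n+1)}{2}$ and $\frac{n(n+1)(2n+1)}{6}$.

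The step I expect to be the main obstacle is not any single sum but the final simplification, where the $\frac{1}{n-1}$ hidden inside $r_n$ must cancel cleanly. After substituting, I would factor $\frac{n(n+1)}{12}$ out of the bracketed expression and observe that the residual polynomial factors as $3n^2 - n - 2 = (3n+2)(n-1)$; this $(n-1)$ is exactly what kills the denominator of $r_n$, and it is worth verifying this factorization explicitly so the cancellation is transparent rather than a numerical coincidence.

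Once that cancellation is made, the two contributions combine to $\frac{n(n+1)}{12}\big[2(2n+1)-(3n+2)\big] = \frac{n^2(n+1)}{12}$, and dividing by $n^2$ yields $\|c_n\|^2 = \frac{n+1}{12}$. A useful sanity check is that this is a clean low-degree expression in $n$, consistent with the radius-of-gyration constants one expects for a closed Gaussian-type object, and one could also confirm small cases ($n=2,3$) against a direct computation to guard against an arithmetic slip in the power-sum manipulations.
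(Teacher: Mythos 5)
Your proposal is correct and follows the same overall strategy as the paper: start from Lemma \ref{c_P}, replace each $(e_i\cdot e_j)$ by $r_n=\frac{-1}{n-1}$ via linearity of averaging, and evaluate the two resulting sums. The one genuine difference is in how the double sum is handled. The paper expands $\sum_{j=2}^{n}\sum_{i=1}^{j-1}(n-j+1)(n-i+1)$ into a cubic polynomial in $j$, then has to argue separately that the $j=1$ term vanishes so the lower summation bound can be shifted before applying power-sum formulas. You instead set $a_i=n-i+1$ and use the symmetric-function identity $\sum_{1\le i<j\le n}a_ia_j=\tfrac12\bigl[(\sum_i a_i)^2-\sum_i a_i^2\bigr]$, which reduces everything to the two standard power sums $\frac{n(n+1)}{2}$ and $\frac{n(n+1)(2n+1)}{6}$ and immediately exposes the factorization $3n^2-n-2=(3n+2)(n-1)$, so the cancellation with the $\frac{1}{n-1}$ in $r_n$ is structural rather than an artifact of polynomial bookkeeping. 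The arithmetic checks out: $\frac{n(n+1)}{12}\bigl[2(2n+1)-(3n+2)\bigr]=\frac{n^2(n+1)}{12}$, giving $\parallel c_n\parallel^2=\frac{n+1}{12}$ after dividing by $n^2$. Your route is shorter and less error-prone; the paper's route has the mild virtue of being the same mechanical template it reuses in Lemma \ref{later}, where the analogous inner sum runs over a subsegment.
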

 
\begin{proof}
From Lemma \ref{c_P}, we have that for any $P \in \mathcal{P}_n$, $$\displaystyle \parallel c_P \parallel^2 =  \frac{1}{n^2} (\sum_{k=1}^n (n-k+1)^2 + 2 \sum_{1 \leq i < j \leq n} (n-j+1)(n-i+1)(e_i \cdot e_j)).$$

We will replace $(e_i \cdot e_j)$ with $r_n$ to find $\parallel c_n \parallel^2$:
\begin{eqnarray*}
 \parallel c_n \parallel^2 &=&  \frac{1}{n^2} (\sum_{k=1}^n (n-k+1)^2 + 2 \sum_{1 \leq i < j \leq n} (n-j+1)(n-i+1)r_n).\\
 & = & \frac{1}{n^2} (\sum_{k=1}^n (n-k+1)^2) + \frac{2}{n^2} \sum_{j = 2}^n \sum_{i = 1}^{j-1}\left( (n-j+1)(n-i+1)\left( \frac{-1}{n-1}\right)\right).
\end{eqnarray*}

Simplifying the first term, we have $$\displaystyle \frac{1}{n^2} (\sum_{k=1}^n (n-k+1)^2) =  \frac{2n^2 + 3n +1}{6n}.$$ Likewise the second term, 
$$\displaystyle \frac{2}{n^2} \sum_{j = 2}^n \sum_{i = 1}^{j-1}\left( (n-j+1)(n-i+1)\left( \frac{-1}{n-1}\right)\right)$$
is simplified to 
$$\frac{-1}{n^2(n-1)} \sum_{j = 2}^n \left(j^3 - (3n+4)j^2 + (2n^2 + 7n +5)j - (2n^2+4n+2) \right).$$ 
The fact that we are summing from $j = 2$ rather than $j = 1$ complicates this calculation. To fix this, we change the sum so that it starts at $j = 1$, and at the end, we will subtract off the $j = 1$ term: 
$$1 - 3n-4 + 2n^2 + 7n +5 - 2n^2-4n-2 = 0.$$ 
Because the $j = 1$ term is $0$, we may change the summation to start at $j = 1$. This allows further simplification, and the second term is finally $$ \displaystyle -\left(\frac{3n^4 + 2n^3 -3n^2 - 2n}{12n^2(n-1)}\right).$$ 

Combining these two terms, we have 
\begin{eqnarray*}
\parallel c_P \parallel^2 & = & \frac{2n^2 + 3n +1}{6n} - \frac{3n^4 + 2n^3 -3n^2 - 2n}{12n^2(n-1)} \\
&=& \frac{(n+1)}{12}\\
\end{eqnarray*}

\end{proof} 

\subsection{Squared Radius of Gyration}

With the average center of mass defined, we may ask for the average squared difference between the center of mass and the vertices. This average difference is precisely the average squared radius of gyration. 

\begin{definition}
Let $P \in \mathcal{P}_n$ with vertices $v_1,...,v_n$. With $c_P$ as above, define the {\em squared radius of gyration of $P$} to be $$\displaystyle R_G^2(P) = \frac{1}{n} \sum_{k=1}^n  \parallel v_k - c_P \parallel^2.$$
\end{definition}

\begin{definition}
We will define the average of the set $\{ R_G^2(P) : P\in \mathcal{P}_n\}$ to be $R_{G,n}^2$, the {\em average squared radius of gyration} of $\mathcal{P}_n$.  
\end{definition}

\begin{theorem}
\label{ARG}
The average squared radius of gyration for all $P \in \mathcal{P}_n$ is $R_{G,n}^2 = \frac{n+1}{12}$.    
\end{theorem}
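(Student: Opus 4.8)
The plan is to reduce the average squared radius of gyration to two quantities already computed in the excerpt: the average squared end-to-end distance $A(k,n)$ from Theorem~\ref{EtE}, and the average center of mass $\|c_n\|^2$ from Lemma~\ref{c_P4n}. First I would establish the pointwise (per-polygon) decomposition
$$R_G^2(P) = \frac{1}{n}\sum_{k=1}^n \|v_k\|^2 - \|c_P\|^2.$$
This follows by expanding $\|v_k - c_P\|^2 = \|v_k\|^2 - 2\,(v_k\cdot c_P) + \|c_P\|^2$ inside the definition of $R_G^2(P)$ and using $\frac{1}{n}\sum_{k=1}^n v_k = c_P$, so that the cross term $-\frac{2}{n}\sum_k (v_k\cdot c_P) = -2\|c_P\|^2$ cancels one copy of $\|c_P\|^2$. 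This is the classical variance/parallel-axis identity, and it holds for every fixed $P$ before any averaging is done.

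Next I would average over $\mathcal{P}_n$, which by linearity gives $R_{G,n}^2 = \frac{1}{n}\sum_{k=1}^n \langle \|v_k\|^2\rangle - \|c_n\|^2$. For the first sum I note that $v_k = \sum_{i=1}^k e_i$ is exactly the end-to-end vector of the subsegment $e_1,\dots,e_k$, so by the edge-product lemma (equivalently Theorem~\ref{EtE}) one has $\langle\|v_k\|^2\rangle = k + \frac{k(k-1)}{2}\cdot 2\,r_n = k - \frac{k(k-1)}{n-1} = \frac{k(n-k)}{n-1} = A(k,n)$; the $k=n$ term vanishes, consistent with $v_n=0$. For the second term I simply quote $\|c_n\|^2 = \frac{n+1}{12}$ from Lemma~\ref{c_P4n}.

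The remaining work is purely arithmetic: evaluating $\frac{1}{n}\sum_{k=1}^n A(k,n) = \frac{1}{n(n-1)}\sum_{k=1}^{n-1} k(n-k)$ using the standard identities for $\sum k$ and $\sum k^2$, which yields $\sum_{k=1}^{n-1} k(n-k) = \frac{(n-1)n(n+1)}{6}$ and hence a first term equal to $\frac{n+1}{6}$. Subtracting the center-of-mass term then gives $R_{G,n}^2 = \frac{n+1}{6} - \frac{n+1}{12} = \frac{n+1}{12}$, as claimed.

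I expect the only genuine obstacle to be bookkeeping rather than conceptual difficulty: setting up the decomposition and verifying the exact cancellation of the cross term, and matching the index on $\langle\|v_k\|^2\rangle$ with the length convention used for $A(k,n)$. As a cross-check and alternative route, one can avoid the center of mass altogether via the identity $\frac{1}{n}\sum_{k=1}^n \|v_k - c_P\|^2 = \frac{1}{2n^2}\sum_{i,j}\|v_i - v_j\|^2$; averaging the right-hand side gives $\frac{1}{2n^2}\sum_{k=1}^{n-1} n\,A(k,n)$, since for each gap $k$ there are exactly $n$ ordered vertex pairs at that separation, and this collapses to $\frac{n+1}{12}$ by the same summation, providing an independent confirmation of the answer.
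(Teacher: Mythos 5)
Your proposal is correct and follows essentially the same route as the paper: the parallel-axis decomposition $R_G^2(P) = \frac{1}{n}\sum_k \|v_k\|^2 - \|c_P\|^2$, followed by substituting the averages from Theorem~\ref{EtE} and Lemma~\ref{c_P4n} and summing. The pairwise-distance cross-check is a nice independent confirmation but not needed.
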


\begin{proof}
For any $P \in \mathcal{P}_n$, we have that 
\begin{eqnarray*}
\displaystyle R_G^2(P) &=&  \frac{1}{n} \sum_{k=1}^n \parallel v_k - c_P \parallel^2\\
&=& \frac{1}{n} \sum_{k=1}^n (\parallel v_k \parallel^2 - 2 (v_k \cdot c_P) + \parallel c_P \parallel^2) \\
&=& \frac{1}{n}\sum_{k=1}^n \parallel v_k \parallel^2 - 2 ( \sum_{k=1}^n\frac{v_k}{n} \cdot c_P) + \frac{1}{n}(n\parallel c_P \parallel^2) \\
&=&   \frac{1}{n}\sum_{k=1}^n \parallel v_k \parallel^2 - 2 ( c_P \cdot c_P)+ \parallel c_P \parallel^2\\
&=& \frac{1}{n}\sum_{k=1}^n \parallel v_k \parallel^2 -  \parallel c_P \parallel^2   
\end{eqnarray*}
From Theorem \ref{EtE} we have that  the average value of $ \parallel v_k \parallel^2 $ is $\frac{k(n-k)}{n-1} $. From Lemma \ref{c_P4n} we have that the average value of $\parallel c_P \parallel^2$ is $ \frac{n+1}{12}$. Replacing these, we can find the average squared radius of gyration over all $P \in \mathcal{P}_n$, that is, $R_{G,n}^2$ (rather than $R_G^2(P) $ as above.)
\begin{eqnarray*}
\displaystyle R_{G,n}^2&=& \left(\frac{1}{n}\sum_{k=1}^n \frac{k(n-k)}{n-1}\right) -  \frac{n+1}{12} \\
& = & \frac{n^2 - 1 }{12(n-1)} = \frac{n+1}{12} = \frac{n}{12} + \frac{1}{12}. 
\end{eqnarray*}
\end{proof}

Therefore the average squared radius of gyration scales like $\frac{n}{12}$, agreeing with Zimm and Stockmayer's estimate \cite{Zimm49}. 

\subsection{Average Squared Radius of Gyration of Subsegments of Length $k$}

The scaling of the average squared radius of gyration of ideal rings and chains is influenced by the presence of knotting \cite{Matsuda2003}.  As a consequence, one expects that the average squared radius of gyration of subsegments will also be affected by the presence or absence of knotting.

Determining a correlation between knotting and shape characteristics will be affected by the nature of knotting. If the knotted sections of the rings are small, involving relatively few edges, we may not see the influence of those edges in the squared radius of gyration, which can be obscured by the behavior of the unknotted complement. In order to detect local behavior, we study characteristics of shape that are local.

In open chains, the average squared radius of gyration of subsegments of length $k$ inside of larger chains of length $n$ is identical to the average squared radius of gyration of segments of length $k$. That is, the ambient chain has no effect of the shape of the subsegment. This is not true for ideal rings, where we will see that the closure constraint plays a pivotal role in local shape. However, these characteristics may play a pivotal role in identifying and characterizing the knotted portions of open chains. 

\begin{definition}
Let $P \in \mathcal{P}_n$. Define $P_{i,k}$ to be the translated subsegment of $P$ of length $k$ beginning with the $i^{\textrm{th}}$ edge vector, $e_i$. That is, $P_{i,k}$ is a segment starting at the origin, where the $j^{\textrm{th}}$ vertex is given by $\displaystyle \sum_{m = i}^{i+j} e_m$. For ease of notation, we will relabel the position and edge vectors so that the $j^{\textrm{th}}$ vertex is given by $v_j' = \displaystyle \sum_{m = 1}^{j} e_m'$.  
\end{definition}

So $P_{i,k}$ is isomorphic to a subsegment of $P$, though we've done some relabeling. Now we would like to find the center of mass and the squared radius of gyration of $P_{i,k}$. 

\begin{definition}
Let $\displaystyle c_{P_{i,k}} = \frac{1}{k} \sum_{j = 1}^{k} v_j' $, the {\em translated center of mass of $P_{i,k}$}. \
Then we define the {\em squared radius of gyration} of $P_{i,k}$ to be $$R_{G}^2(P_{i,k}) = \frac{1}{k} \displaystyle \sum_{j=1}^{k} \parallel v_j' - c_{P_{i,k}}\parallel^2.$$\
For $P \in \mathcal{P}$ and $k < n$, define the {\em average squared radius of gyration of subsegments of length $k$} as $$R_{G,k}^2(P) = \frac{1}{n} \displaystyle \sum_{j = 1}^{n}  R_{G}^2(P_{i,k}) .$$
\end{definition}

Now, we have the analogous definitions when we take the averages of $c_{P_{i,k}}$ and $R_{G,k}^2(P)$ over all $P \in \mathcal{P}_n$.

\begin{definition}
Let $c_{k,n}$ be {\em average center of mass of a translated subsegment of length $k$}, the average of $\{c_{P_{i,k}}:i \leq n\textrm{ and }P\in \mathcal{P}_n\}$. For some $n$, let $R_{G,n,k}^2$ be the {\em average squared radius of gyration of subsegment of length $k$} to be the average of $\{R_{G,k}^2(P): P \in \mathcal{P}_n\}$.  
\end{definition}
 
As in the last section, in order to compute $R_{G,n,k}^2$, we must first find $c_{k,n}$.  

\begin{lemma}
\label{later}
$\displaystyle \parallel c_{k,n} \parallel^2 = \frac{(2k^2 + 3k + 1)2n -3k(k+1)^2}{12k(n-1)}.$
\end{lemma}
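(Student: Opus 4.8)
The plan is to mirror the two-step structure of Lemma \ref{c_P} and Lemma \ref{c_P4n}, but carried out on the subsegment $P_{i,k}$ in place of the full ring. First I would rewrite the translated center of mass as a single weighted sum of the subsegment's edge vectors. Since $v_j' = \sum_{m=1}^j e_m'$, each edge $e_m'$ contributes to exactly the vertices $v_m', v_{m+1}', \ldots, v_k'$, so reindexing the double sum gives
$$c_{P_{i,k}} = \frac{1}{k}\sum_{j=1}^k v_j' = \frac{1}{k}\sum_{m=1}^k (k-m+1)\, e_m'.$$
This is the exact analogue of the weighted form in Lemma \ref{c_P}, except that the weights are $(k-m+1)$ rather than $(n-j+1)$, reflecting that the subsegment has only $k$ vertices.

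Next I would expand $\parallel c_{P_{i,k}} \parallel^2$ by dotting this sum with itself, separating the diagonal terms (where $e_m'\cdot e_m' = 1$) from the off-diagonal cross terms:
$$\parallel c_{P_{i,k}} \parallel^2 = \frac{1}{k^2}\left(\sum_{m=1}^k (k-m+1)^2 + 2\sum_{1\le m < l \le k}(k-m+1)(k-l+1)(e_m'\cdot e_l')\right).$$

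The conceptual crux — and the step I expect to carry the real content — comes when averaging over all $P\in\mathcal{P}_n$ and all starting positions $i$. The edge vectors $e_m'$ of the subsegment are still edge vectors of the \emph{ambient} ring $P$, so by the average edge product lemma each off-diagonal term averages to $r_n = -\frac{1}{n-1}$, which depends on the full length $n$ and \emph{not} on $k$. This is precisely the mechanism by which the ambient ring length intrudes on the local shape, and it is why the answer cannot be obtained by treating the subsegment as an independent ring of length $k$. Substituting $r_n$ pulls this constant out of the double sum and leaves two purely numerical sums to evaluate.

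The remaining algebra is then routine. With the substitution $p = k-m+1$, the diagonal sum is $\sum_{p=1}^k p^2 = \frac{k(k+1)(2k+1)}{6}$, and the cross-term sum is handled by the identity $2\sum_{m<l} a_m a_l = (\sum_m a_m)^2 - \sum_m a_m^2$ together with $\sum_{p=1}^k p = \frac{k(k+1)}{2}$. Collecting the two contributions over the common denominator $12k(n-1)$, factoring out $(k+1)$, and simplifying yields the claimed value $\frac{(2k^2 + 3k + 1)2n -3k(k+1)^2}{12k(n-1)}$ (using $2k^2+3k+1 = (2k+1)(k+1)$). I expect the main obstacle to be not the computation but recognizing and justifying that the subsegment's edge products inherit the ambient value $r_n$ rather than the self-contained value one would get for a ring of length $k$; once that observation is in place, the rest is bookkeeping.
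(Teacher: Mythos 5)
Your proposal is correct and follows essentially the same route as the paper: expand $\parallel c_{P_{i,k}}\parallel^2$ as a weighted sum over edge vectors, split into diagonal and cross terms, and replace each $(e_m'\cdot e_l')$ by the ambient average $r_n=-\frac{1}{n-1}$ from the average edge product lemma, which is exactly the paper's key step. The only difference is cosmetic — you evaluate the cross-term double sum via $2\sum_{m<l}a_ma_l=(\sum a_m)^2-\sum a_m^2$ where the paper expands a cubic in $j$ and sums term by term — and both yield the same value $\frac{2k+3k^2-2k^3-3k^4}{12(n-1)}$ for that contribution.
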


\begin{proof}
Let us begin with some $\displaystyle \parallel c_{P_{m,k}} \parallel^2. $ 

\begin{eqnarray*}
\displaystyle \parallel c_{P_{m,k}} \parallel^2 &=& \frac{1}{k^2} (\sum_{j=1}^{k} v_j' \cdot \sum_{j=1}^{k} v_j')\\
&=& \frac{1}{k^2} (\sum_{j=1}^{k} \sum_{i = 1}^j e_i' \cdot \sum_{j=1}^{k} \sum_{i = 1}^j e_i')\\
&=& \frac{1}{k^2} \left(\sum_{i=1}^{k} (k - i + 1)^2 + 2 \sum_{j=2}^{k}\sum_{i = 1}^{j-1} (k - j + 1)(k - i + 1)(e_i' \cdot e_j')\right)\\
\end{eqnarray*}

Because $(e_i' \cdot e_j') = (e_{i+m}' \cdot e_{j+m}')$, the product is still independent of $i$ and $j$, and we can replace $(e_i' \cdot e_j')$ with $\displaystyle r_n  = \frac{-1}{n-1}$. This will let us take the average over all such $P_{m,k}$ to obtain $\parallel c_{k,n} \parallel^2$:
$$\displaystyle \parallel c_{k,n} \parallel^2 = \frac{1}{k^2} \left(\sum_{i=1}^{k} (k - i + 1)^2 + 2 \sum_{j=2}^{k}\sum_{i = 1}^{j-1} (k - j + 1)(k - i + 1)\left(\frac{-1}{n-1}\right)\right). $$

There are two sums to evaluate. The first is straightforward: $\displaystyle \sum_{i=1}^{k} (k - i + 1)^2= \frac{k(k+1)(2k+1)}{6}$
The second sum is $S = \displaystyle 2 \sum_{j=2}^{k}\sum_{i = 1}^{j-1} (k - j + 1)(k - i + 1)\left(\frac{-1}{n-1}\right).$ 

\begin{eqnarray*}
S & = & \frac{-2}{n-1} \sum_{j=2}^{k}\sum_{i = 1}^{j-1} (k - j + 1)(k - i + 1)\\
& = & \frac{-1}{n-1} \sum_{j=2}^{k}\left(j^3 - (3k +4)j^2 + (2k^2 +7k +5) j - 2(k+1)^2 \right)\\
\end{eqnarray*}

Evaluating $j^3 - (3k +4)j^2 + (2k^2 +7k +5) j - 2(k+1)^2$ at $j = 1$ we have $1 - 3k +4 + 2k^2 +7k +5 - 2k^2 - 4k -2 = 0$. Thus we can replace the lower bound of our sum, $j = 2$ with $j = 1$ with no penalty. 

\begin{eqnarray*}
S & = & \frac{-1}{n-1} \sum_{j=1}^{k}\left(j^3 - (3k +4)j^2 + (2k^2 +7k +5) j - 2(k+1)^2 \right)\\
& = & \frac{ 2k + 3k^2 - 2k^3 - 3k^4}{12(n-1)}
\end{eqnarray*}

Combining these terms, we have 
\begin{eqnarray*}
\parallel c_{k,n} \parallel^2 &=& \frac{1}{k^2}(\frac{k(k+1)(2k+1)}{6} +  \frac{ 2k + 3k^2 - 2k^3 - 3k^4}{12(n-1)})\\
& = & \frac{(2k^2 + 3k + 1)2n - 3k(k+1)^2}{12k(n-1)}
\end{eqnarray*}

\end{proof}

We note that when $k = n$, we have 
\begin{eqnarray*}
\displaystyle \parallel c_{n,n} \parallel^2 & = & \frac{(2n^2 + 3n + 1)2n - 3n(n+1)^2}{12n(n-1)}\\
& = & \frac{n^3  - n}{12n(n-1)} = \frac{n+1}{12}\\
\end{eqnarray*}
which agrees with Lemma \ref{c_P4n}.

\begin{theorem}
\label{ARG}
The average squared radius of gyration of a subsegment of length $k$, taken over all such subsegments in all $P \in \mathcal{P}_n$ is $R_{G,n,k}^2 = \frac{(k^2 - 1)(2n - k)}{12k(n-1)}$.    
\end{theorem}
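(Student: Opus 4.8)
The plan is to mirror the structure of the proof of the full-ring radius of gyration, exploiting the parallel-axis identity. First I would observe that for any finite collection of points the mean squared distance to their centroid equals the mean squared norm minus the squared norm of the centroid; applied to the $k$ vertices $v_1',\dots,v_k'$ of $P_{i,k}$ this gives
$$R_G^2(P_{i,k}) = \frac{1}{k}\sum_{j=1}^{k}\parallel v_j'\parallel^2 - \parallel c_{P_{i,k}}\parallel^2,$$
exactly as in the full-ring computation. Averaging this identity over all starting positions $i$ and all $P\in\mathcal{P}_n$, and invoking the definitions of $c_{k,n}$ and $R_{G,n,k}^2$, yields $R_{G,n,k}^2 = \frac{1}{k}\sum_{j=1}^{k}\left<\parallel v_j'\parallel^2\right> - \parallel c_{k,n}\parallel^2$.

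Next I would identify the averages $\left<\parallel v_j'\parallel^2\right>$. Since $v_j' = \sum_{m=1}^{j} e_m'$ is a sum of $j$ consecutive edge vectors of the ring, $\parallel v_j'\parallel^2$ is precisely a squared end-to-end distance of length $j$, so by Theorem \ref{EtE} its average over $\mathcal{P}_n$ is $A(j,n) = \frac{j(n-j)}{n-1}$. (Equivalently, one expands the square and replaces each $(e_a'\cdot e_b')$ by $r_n = \frac{-1}{n-1}$, which is legitimate because the edge products are independent of position, as already noted in the proof of Lemma \ref{later}.) The first term then becomes $\frac{1}{k(n-1)}\sum_{j=1}^{k}(jn - j^2)$, which I would evaluate using the standard power sums $\sum_{j=1}^{k} j = \frac{k(k+1)}{2}$ and $\sum_{j=1}^{k} j^2 = \frac{k(k+1)(2k+1)}{6}$ to obtain $\frac{(k+1)(3n-2k-1)}{6(n-1)}$.

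Finally I would subtract the value of $\parallel c_{k,n}\parallel^2$ supplied by Lemma \ref{later} and simplify. Writing $\parallel c_{k,n}\parallel^2 = \frac{(k+1)\left[2n(2k+1) - 3k(k+1)\right]}{12k(n-1)}$ (using the factorization $2k^2+3k+1=(k+1)(2k+1)$) and placing both terms over the common denominator $12k(n-1)$, the factor $(k+1)$ comes out and the bracketed numerator collapses to $2k(3n-2k-1) - \left[2n(2k+1)-3k(k+1)\right] = (k-1)(2n-k)$, giving $R_{G,n,k}^2 = \frac{(k+1)(k-1)(2n-k)}{12k(n-1)} = \frac{(k^2-1)(2n-k)}{12k(n-1)}$ as claimed. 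The only genuine work here is this terminal simplification, so the main obstacle is the bookkeeping: collecting the polynomial in the bracket correctly and recognizing the factorization $(k-1)(2n-k)$. A convenient sanity check is to set $k=n$, where the formula reduces to $\frac{(n^2-1)n}{12n(n-1)} = \frac{n+1}{12}$, matching the full-ring average squared radius of gyration computed earlier.
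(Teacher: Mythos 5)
Your proposal is correct and follows essentially the same route as the paper: the parallel-axis identity $R_G^2(P_{i,k}) = \frac{1}{k}\sum_{j=1}^{k}\parallel v_j'\parallel^2 - \parallel c_{P_{i,k}}\parallel^2$, substitution of the averages from Theorem \ref{EtE} and Lemma \ref{later}, and algebraic simplification. You simply carry out the final bookkeeping (and the $k=n$ sanity check) more explicitly than the paper does.
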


\begin{proof}
For some $P_{i,k}$ we have 
\begin{eqnarray*}
R_{G}^2(P_{i,k}) &=& \frac{1}{k} \sum_{j=1}^{k} \parallel v_j' - c_{P_{i,k}}\parallel^2\\
&=& \frac{1}{k}\sum_{j=1}^{k}\parallel v_j'\parallel - 2 (\frac{1}{k}\sum_{j=1}^{k} v_j' \cdot c_{P_{i,k}}) + \frac{1}{k}\sum_{j=1}^{k}\parallel c_{P_{i,k}}\parallel^2\\
&=& \frac{1}{k}\sum_{j=1}^{k}\parallel v_j'\parallel - 2 ( c_{P_{i,k}}\cdot c_{P_{i,k}}) + \frac{1}{k}k\parallel c_{P_{i,k}}\parallel^2\\
&=& \frac{1}{k}\sum_{j=1}^{k}\parallel v_j'\parallel - \parallel c_{P_{i,k}}\parallel^2\\
\end{eqnarray*}

From Lemma \ref{later}, $\parallel c_{P_{i,k}}\parallel^2$ is, on average, $\displaystyle \parallel c_{k,n} \parallel^2 =\frac{(2k^2 + 3k + 1)2n - 3k(k+1)^2}{12k(n-1)}$. Likewise, average value of $\parallel v_j'\parallel^2$, the end to end distance of the segment $P_{i,k}$, is given in Theorem \ref{EtE}. By replacing $\parallel c_{P_{i,k}}\parallel^2$ and $\parallel v_j'\parallel^2$ with the averages for these values, we can find $R_{G,n,k}^2$:

\begin{eqnarray*}
R_{G,n,k}^2 &=& \frac{1}{k}\sum_{j=1}^{k} \frac{j(n-j)}{n-1} - \frac{(2k^2 + 3k + 1)2n - 3k(k+1)^2}{12k(n-1)}\\
&=&\frac{(k^2 - 1)(2n - k)}{12k(n-1)}\\
\end{eqnarray*}

\end{proof}

We note that when $k = n$, we have 
\begin{eqnarray*}
\displaystyle R_{G,n,n}^2 &=& \frac{(n^2 - 1)(n)}{12n(n-1)}\\
& = & \frac{n+1}{12}\\
\end{eqnarray*}
which agrees with Theorem \ref{ARG}.
 
Now for a fixed $n$, we have a function that returns the average squared radius gyration of a subsegment of length $k$. 

\subsection{Comparison Between Ideal Rings and Open Ideal Chains}

For comparison, consider ideal chains. From Lemma \ref{c_P}, which also holds for ideal chains, we know that for any $W \in \mathcal{W}_n$, the squared center of mass is given by $$\parallel c_W \parallel^2 = \frac{1}{n^2} \left(\sum_{k=1}^n (n-k+1)^2 + 2 \sum_{1 \leq i < j \leq n} (n-j+1)(n-i+1)(e_i \cdot e_j)\right).$$ When the average is taken over all ideal chains $W \in \mathcal{W}_n$, define the average ${c_{\mathcal{W}_n}}$. Likewise, let the squared radius of gyration of the ideal chain $W$ be $R_G^2(W)$, and let the average over all such ideal chains be ${R_{G,\mathcal{W}_n}^2}$. 

\begin{lemma}
 $R_{G,\mathcal{W}_n}^2 =  \frac{n^2 - 1}{6n}.$
 \end{lemma}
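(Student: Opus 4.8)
The plan is to mirror the proof of the ring case (Theorem \ref{ARG}), since the purely algebraic identity
$$R_G^2(W) = \frac{1}{n}\sum_{k=1}^n \|v_k\|^2 - \|c_W\|^2$$
uses only the definition of the center of mass and the expansion $\|v_k - c_W\|^2 = \|v_k\|^2 - 2(v_k\cdot c_W) + \|c_W\|^2$, so it holds verbatim for open chains. The task then reduces to averaging the two terms on the right over $\mathcal{W}_n$.

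For the first term I would invoke the chain analogue of Theorem \ref{EtE}: a subsegment of length $k$ inside an open chain is itself an open chain of length $k$, so by Lemma \ref{Cn} its average squared end to end distance is $\langle \|v_k\|^2 \rangle = k$. Averaging over vertices gives $\langle \frac{1}{n}\sum_{k=1}^n \|v_k\|^2 \rangle = \frac{1}{n}\sum_{k=1}^n k = \frac{n+1}{2}$.

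For the second term I would use Lemma \ref{c_P}, which the excerpt already notes applies to ideal chains. The key simplification, and the only conceptual difference from the ring computation, is that for open chains $\langle e_i \cdot e_j \rangle = 0$ whenever $i\neq j$, so every cross term in the center-of-mass formula vanishes upon averaging. This leaves only the diagonal contribution $\langle \|c_W\|^2 \rangle = \frac{1}{n^2}\sum_{k=1}^n (n-k+1)^2$, which after the reindexing $m = n-k+1$ becomes $\frac{1}{n^2}\cdot\frac{n(n+1)(2n+1)}{6}$.

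Finally, subtracting gives $R_{G,\mathcal{W}_n}^2 = \frac{n+1}{2} - \frac{(n+1)(2n+1)}{6n}$, and a routine common-denominator simplification collapses this to $\frac{(n+1)(n-1)}{6n} = \frac{n^2-1}{6n}$. I do not expect any genuine obstacle: because the correlation $\langle e_i\cdot e_j\rangle$ is $0$ rather than the value $r_n = -1/(n-1)$ used in the ring case, the center-of-mass average is actually simpler here, and the entire argument is a short specialization of the machinery already assembled for rings.
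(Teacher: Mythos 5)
Your proposal is correct and follows essentially the same route as the paper: both use the identity $R_G^2(W) = \frac{1}{n}\sum_{k=1}^n \|v_k\|^2 - \|c_W\|^2$, average $\|v_k\|^2$ to $k$ via Lemma \ref{Cn}, and compute $\|c_{\mathcal{W}_n}\|^2 = \frac{2n^2+3n+1}{6n}$ from Lemma \ref{c_P} with the cross terms vanishing. The arithmetic checks out, so there is nothing to add.
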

\begin{proof}
First, we find $\parallel c_{\mathcal{W}_n} \parallel^2$, which is simplified by that fact that in $\mathcal{W}_n$, the average value of $(e_i \cdot e_j)$ is  $0$ for $i \neq j$.
\begin{eqnarray*}
\parallel c_{_n} \parallel^2 & = & \frac{1}{n^2} \left(\sum_{k=1}^n (n-k+1)^2 + 2 \sum_{1 \leq i < j \leq n} (n-j+1)(n-i+1)(e_i \cdot e_j)\right)\\
& = & \frac{1}{n^2} \left(\sum_{k=1}^n (n-k+1)^2\right)\\
& =& \frac{2n^2 + 3n +1}{6n}. 
\end{eqnarray*}
As in the first part of Theorem \ref{ARG}, the squared radius of gyration of some open chain, $R_G^2(W)$, is 
$$R_G^2(W) = \frac{1}{n}\sum_{k=1}^n \parallel v_k \parallel^2 -  \parallel c_W \parallel^2.$$ Replacing $\parallel c_W \parallel^2$ with the average $\parallel c_{\mathcal{W}_n} \parallel^2$ and $\parallel v_k \parallel^2$ with its average value, $k$, from Lemma \ref{Cn}, we can find the average radius of gyration for open chains, $R_{G,{\mathcal{W}_n}}^2$:

\begin{eqnarray*}
R_{G,{\mathcal{W}_n}}^2 & = & \frac{1}{n}\sum_{k=1}^n k - \parallel c_{\mathcal{W}_n} \parallel^2 \\
& = &  \frac{1}{n} \left(\frac{n(n+1)}{2}\right) -  \frac{2n^2 + 3n +1}{6n} \\
& = & \frac{n^2 - 1}{6n}. 
\end{eqnarray*} 
\end{proof} 
 
For open chains, the squared radius of gyration of a subsegment of length $k$ is the same as for a chain of length $k$. So we may compare $R_{G,n,k}^2  =\displaystyle \frac{(k^2 - 1)(2n - k)}{12k(n-1)}$ and $\displaystyle R_{G,\mathcal{W}_n}^2  = \frac{k^2 - 1}{6k}$.

\begin{figure}[!htp]
  \begin{center}
    \includegraphics[scale=.78]{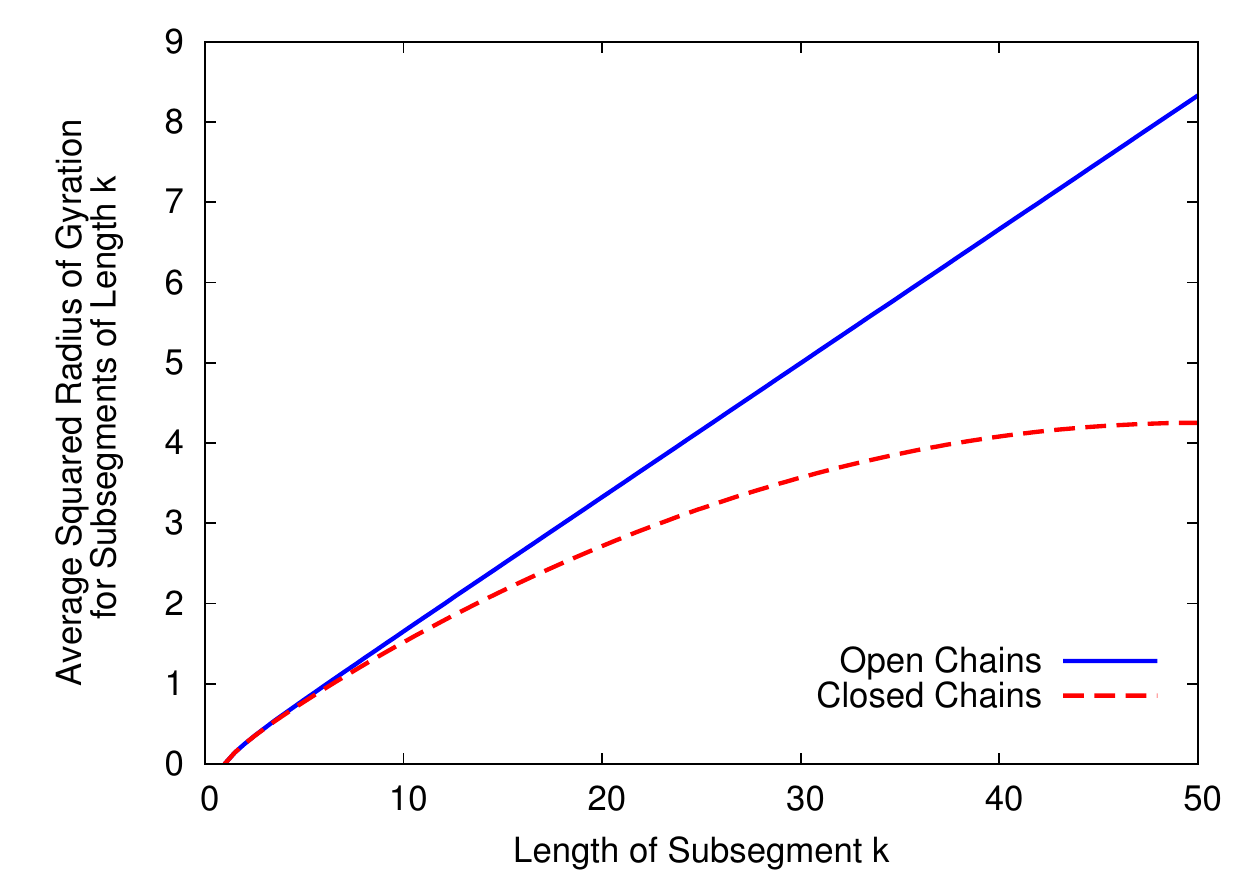}
  \end{center}
 \caption{Here we compare the average squared radius of gyration of subsegments of length $k$ for open chains (blue) and ideal rings (red and dashed).}
\end{figure}

As with squared end to end distance, we can see that the average squared radius of gyration of subsegments in an ideal ring is radically different from the average squared radius of gyration of subsegments in an open ideal chain. For ideal rings and chains of length $n$, in order to have the averages squared radius of gyration of a subsegment of length $k$ for ideal rings to be within $\frac{1}{100}$ of the squared radius of gyration of a subsegment of length $k$ in an ideal chain, we must have each other, we must have that the length of the subsegment considered, $k$, is less than $\frac{n}{50}$. So for $n=1000$, $k$ must be $20$ or less.

Recall, with squared end to end distance, for the averages to be within $1\%$ of each other, the segments had to be less than $\frac{1}{100}^{\textrm{th}}$ of the length, so $k$ had to be length $10$ or shorter if $n = 1000$. This suggests that the squared end to end distance characterization is more sensitive than squared radius of gyration of subsegments, as it is more affected by the closure condition. Once again, we observe that the local scale of an ideal ring differs significantly from that of a ideal chain at all but the smallest length scales.

\section{Experimental Methods}

Generating a random sample from $\mathcal{P}_n$ is much trickier than generating a random walk. With the latter, we can uniformly sample points edge vectors on the unit sphere. The complication in $\mathcal{P}_n$ is the closure constraint: we cannot just generate random walks and hope that they close. 

In order to randomly sample $\mathcal{P}_n$ we have two step process called the hedgehog method \cite{Ken11}. First, we find a starting point, then we use crankshaft rotations to sample the space. After some number of moves, our sample is independent of the starting point, and is a random element of $\mathcal{P}_n$.   

\subsection{Hedgehog Method}

The hedgehog method begins with selecting $n$ points uniformly on the unit sphere, and label them $e_1, ... , e_n$. Then, we add to that list each $e_i$'s negative, $-e_i$. Thus we have $2n$ edge vectors, $e_1,  -e_1, e_2, -e_2, ...,e_n,  -e_n$.

\begin{figure}[!htp]
  \begin{center}
    \includegraphics[scale=.60]{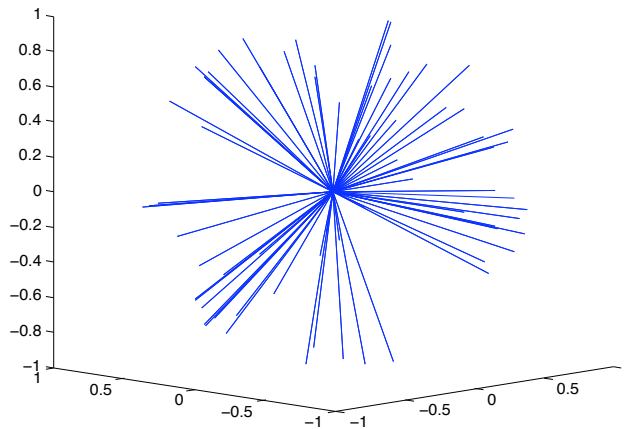}
  \end{center}
 \caption{The edge vectors $e_1,  -e_1, e_2, -e_2, ...,e_n,  -e_n$ are plotted above, hence this is called the hedgehog method.}
\end{figure}

We randomly permute these, getting a list of edge vectors $e_1', e_2', ... e_{2n}'$. Adding in each edge vector's negative ensures the all important closure condition is met.

\subsubsection{Crankshaft Rotations}

This is a good starting point, but for each edge vector, its exact opposite is also an edge vector, which is undesirable. 

We finish the hedgehog method by performing crankshaft rotations. We begin a crankshaft rotation by randomly selecting two non-parallel edge vectors $e_j$ and $e_k$. These are rotated about the axis determined by $e_j + e_k$, by a random angle $\theta$. The form of these rotations is given by the following:

\begin{eqnarray*}
e_j & \longmapsto & \frac{e_j + e_k}{2} + \frac{e_j - e_k}{2}\cos(\theta) + \frac{e_j \times e_k}{\parallel e_j + e_k \parallel}\sin(\theta)\\
e_j & \longmapsto & \frac{e_k + e_j}{2} + \frac{e_k - e_j}{2}\cos(\theta) + \frac{e_k \times e_j}{\parallel e_k + e_j \parallel}\sin(\theta)
\end{eqnarray*}

Because the sum $e_j+e_k$ is conserved, the modified sequence of edge vectors and vertices still satisfies the closure condition. 

Because any equilateral polygon can be deformed by a finite sequence of crankshaft rotations to the regular polygon, a finite series of crankshaft rotations will take us from one polygon to another random polygon \cite{Ken11}. We performed $6n$ crankshaft rotations on each sample polygon.  

\section{Experimental Results}

We use the above methods to randomly generate ideal rings. Numerically, we will have two primary foci: how quickly does the average for an ensemble converge to these theoretical values, and how will knotting affect these characteristics?  

\subsection{Convergence}

For various population sizes, $10^1$, $10^2$, $10^3$, $10^4$ and $10^5$, we generated that many $50$ edged polygons with $150$ crankshaft rotations. For each polygon, we found the squared radius of gyration, and took the average over the population. For each population size, we did this $10$ times. We then compared these averages to the theoretical average. We define $E(n)$ to be the difference between the experimental average and the theoretical average with $n$ samples. Figure \ref{conv} shows the convergence result. 

\begin{figure}[!htp]
  \begin{center}
    \includegraphics[scale=.45]{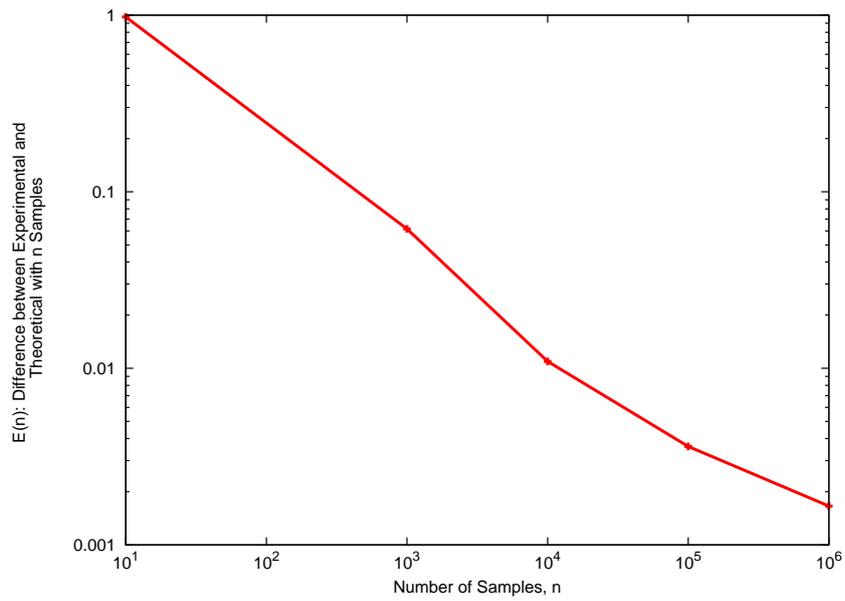}
  \end{center}
 \caption{For population sizes $10^1$, $10^2$, $10^3$, $10^4$ and $10^5$ we computed the average radius of gyration, and plotted it against the theoretical value for the same length. Note that this is a log scale plot of the data. }
   \label{conv}
\end{figure}

For population sizes of $10000$ and larger, we have excellent convergence to the theoretical average. Using linear regression, we can estimate that $E(n)\sim Cn^{-1.559}$, providing for excellent convergence for large $n$.

\subsection{Knotting}

Knotted ideal rings, specifically trefoils, were generated by first constructing ideal rings of length 50, then calculating their knot type, and saving those of the given knot type.

By the length of a knot in a ring we mean the length of the shortest subchain that contains the knot. The knot length is determined by examining subsegments of progressively longer length, starting at all possible locations. Each open segment is situated inside of a large sphere. A random collection of points on the surface of the sphere is selected. The ends of the segment are closed to each point, and the knot type of each of these closures is then calculated. This yields a spectrum of knot types, as in \cite{ marcone2005,  marcone2007, Ken05, millett2005, tubiana2011}. A segment is considered to be a trefoil if the closure is a trefoil with some tolerance (greater than $50$\%). The knotted portion is identified as the shortest segment which is a trefoil, for which the complement is unknotted.   

\begin{figure}[!htp]
  \begin{center}
    \includegraphics[scale=.78]{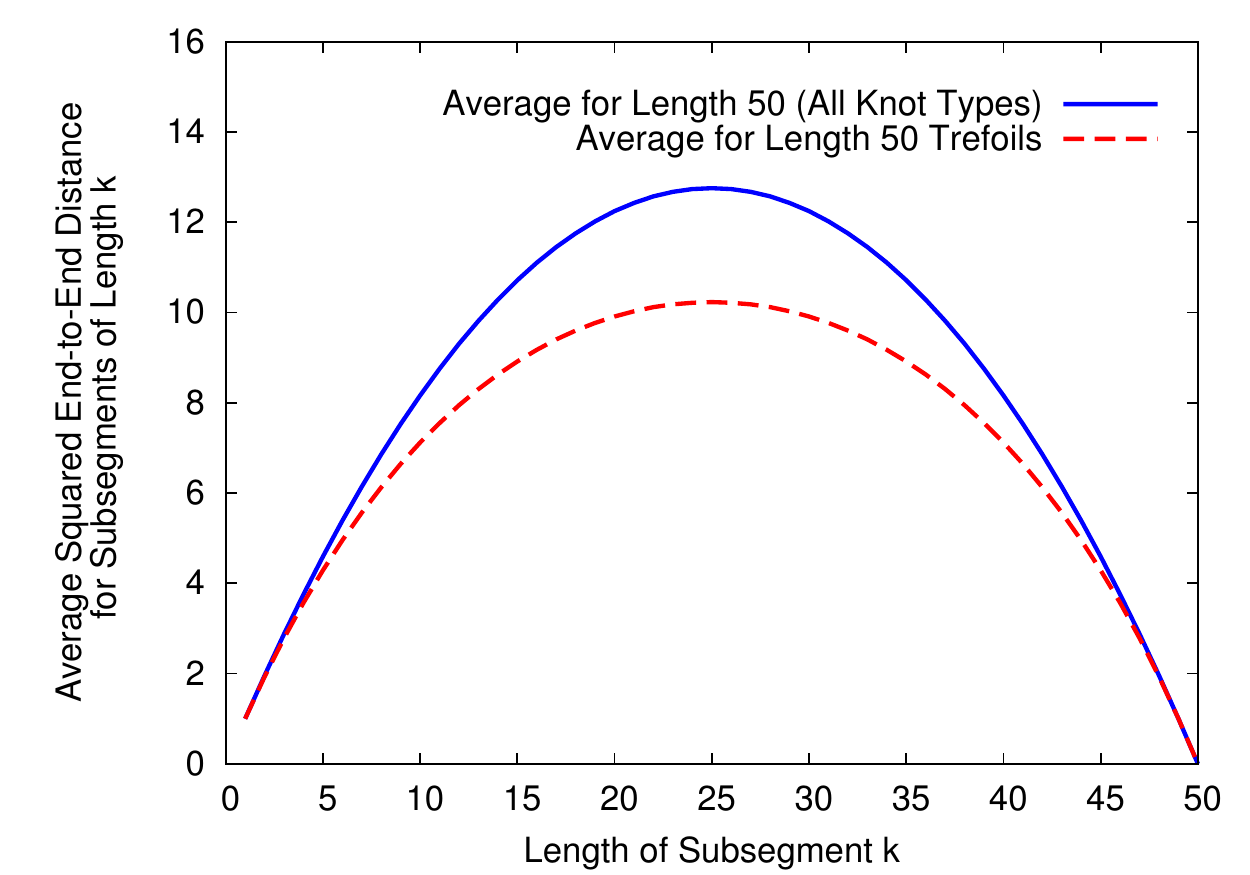}
  \end{center}
 \caption{Here we compare the average squared end to end distance for a phantom population of ideal rings (blue) and a population of randomly sampled trefoils (red and dashed).}
\end{figure}

Above we compare the average squared end to end distance for a phantom population of ideal rings and a population of randomly sampled trefoils. We can see that for length $50$, the average for the trefoils is smaller than the average for the whole space. This suggests that for length $50$, knotting compresses the polygon, making vertices closer together.

Looking at the maximum squared end to end distance, we may ask what length curve has the same maximum, $10.2291$. Solving, we have that a curve of $39.89$, approximately $40$, has the given maximum. That suggests that the average shortening caused by knotting is about $10$ edges of length, and that on average a trefoil of length $50$ has an effective length of $40$. 

The average length of the knotted portion of these trefoils is $16.4$. We predict that the difference between the length $10$ reduction we saw above and the average trefoil length, $16.4$, can be accounted for by examining the end to end distance of the knotted portion. As in Figure \ref{knot}, the difference could be explained by an average knotted portion of $16$ to $17$ edges with an average end to end distance around $6$.    

\begin{figure}[!htp]
  \begin{center}
    \includegraphics[scale=.5]{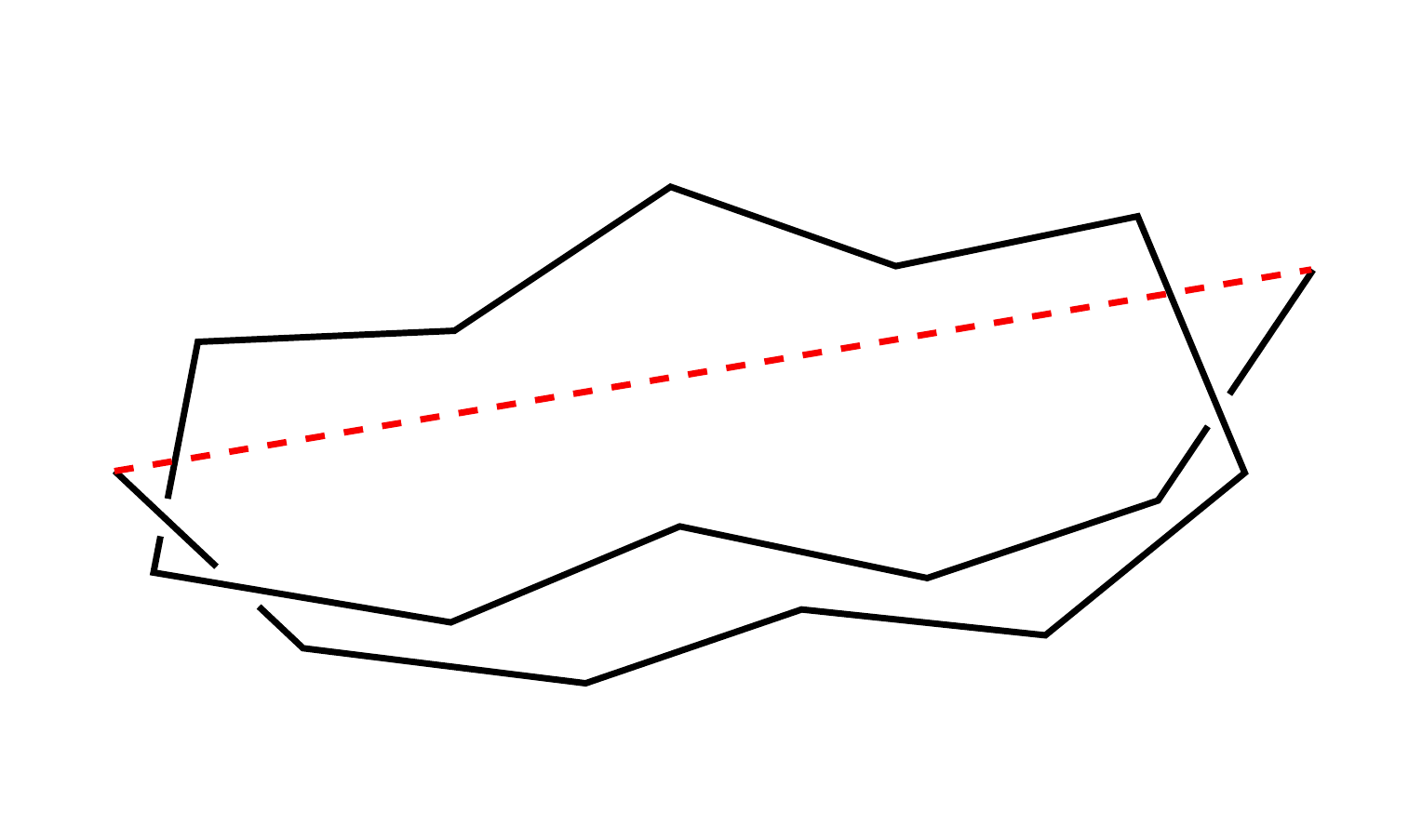}
  \end{center}
 \caption{A hypothetical average knotted section with knot length $16$ edges and end to end distance, (red and dashed), of $6$.}
\label{knot}
\end{figure}

\begin{figure}[!htp]
  \begin{center}
    \includegraphics[scale=.78]{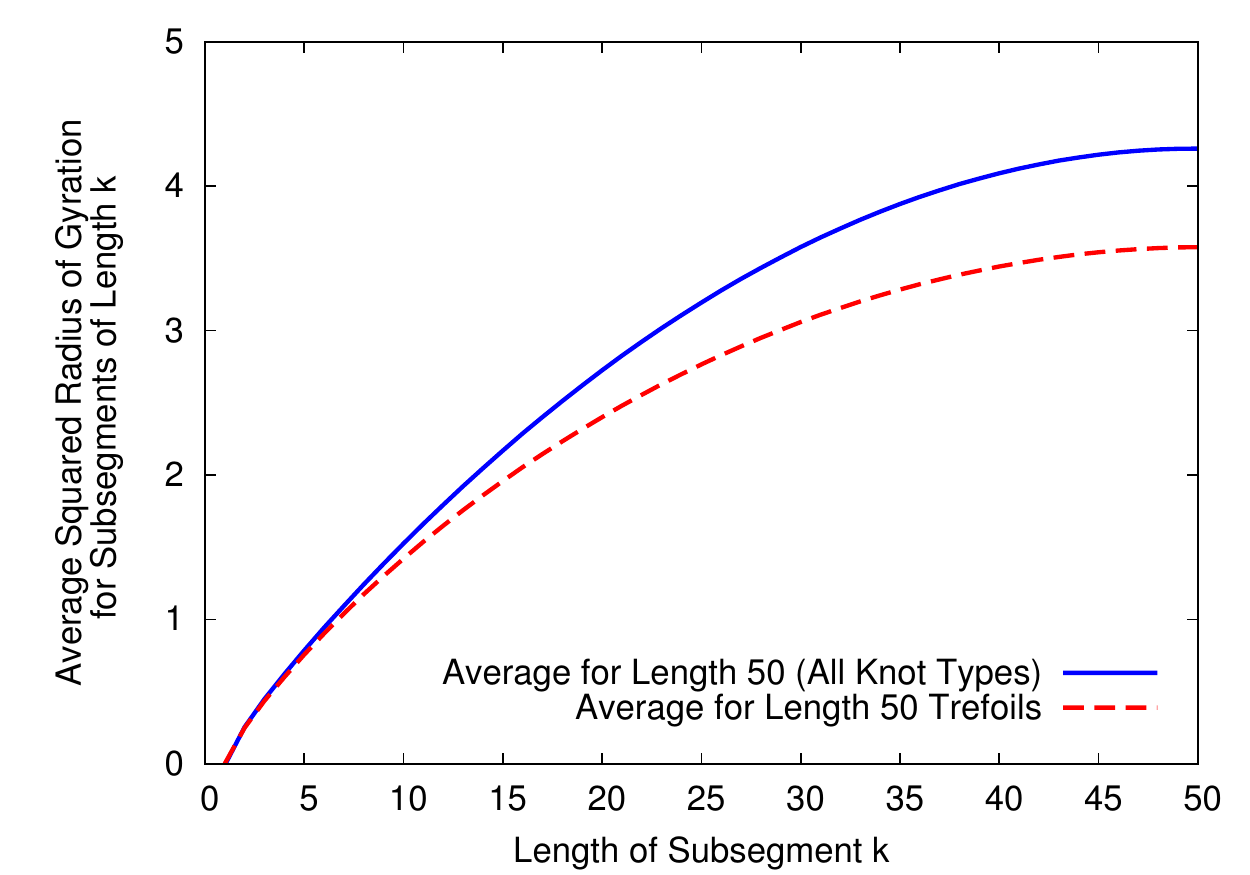}
  \end{center}
 \caption{Here we compare the average squared radius of gyration of subsegments of a phantom population of ideal rings (blue) and of a population of randomly sampled trefoils (red and dashed).  }
\end{figure}

As with squared end to end distance, the average for the trefoils is smaller than the average for the whole space. Again, this suggests that for length $50$, knotting compresses the polygon, making vertices closer together. We know that ideal trefoils, and indeed any collection of some fixed knot type, will for some small number of edges have mean squared radius of gyration less than the the mean for the total population, and for larger number of edges, they will have a greater average squared radius of gyration \cite{dobay03}. For example, the average squared radius of gyration of trefoils is smaller than the average squared radius of gyration of the whole population for lengths less than 175, and the opposite is true for lengths greater than 200 \cite{dobay03}. We expect similar behavior for squared end to end distance.

We can set the average squared radius of gyration of the population of trefoils, $3.5768$ equal to our function for squared radius of gyration, $\frac{n+1}{12}$. Solving for $n$ we have $n = 41.9216$, suggesting that the average shortening caused by knotting is about $8$ edges of length. We can compare this with the average shortening prediction from squared end to end distance, $10$. These differ by $4\%$ of the total length, $50$. 

\section{Conclusion}

These theoretical averages have many potential applications. Primarily, they can be used as a criterion to determine effectiveness of sampling methods. By comparing the the squared radius of gyration of subsegments or the squared internal end to end distance of polygons generated a given sampling method and the theoretical average, we can determine the effectiveness of the generation scheme and corroborate numerical simulations. Further, in the above generation method, we can use this convergence to determine how many crankshaft rotations are needed to sample the space of polygons uniformly, as in \cite{Ken11}. 

As the previous section highlights, squared end to end distance and squared radius of gyration of subsegments may be used to predict knot length, which is computationally expensive to calculate. This will allow us to examine the growth of the knot length as $n \rightarrow \infty$, to determine if average knot length is bounded, or grows proportionally with $n^{1/2}$ or $n$, allowing us to ascertain if knotting is strongly local, local or global on average 
\cite{diao2001, ercolini2007, katritch2000, marcone2005, marcone2007, micheletti2011, nakajima2011}.    

\section{Acknowledgments}

The authors would like to thank Eric Rawdon, both for his wonderful data and feedback on the paper. Eric was responsible for finding the trefoils and their knot lengths, a truly impressive computational task. 
 
\section{Bibliography}

\bibliographystyle{plain}
\bibliography{myrefs}

\end{document}